\newcommand{\norm}[1]{\|#1\|}
\newcommand{\n}{\hspace*{-6pt}}
\theoremstyle{definition}
\newtheorem{theorem}{Theorem}
\newtheorem{definition}{Definition}
\newtheorem{lemma}{Lemma}
\newtheorem{corollary}{Corollary}
\begin{document}
%
\title{Mixed Leader-Follower Dynamics}
%
%
%

\author{Hsin-Lun Li
\thanks{H Li is with the Department of Applied Mathematics, National Sun Yat-sen University, Kaohsiung 80424, Taiwan, e-mail: hsinlunl@asu.edu.}
\thanks{}
\thanks{}}

%
%

\markboth{Hsin-Lun Li}%
{Mixed Leader-Follower Dynamics}
%



\maketitle
\thispagestyle{empty}

\begin{abstract}
 The original Leader-Follower (LF) model partitions all agents whose opinion is a number in $[-1,1]$ to a follower group, a leader group with a positive target opinion in $[0,1]$ and a leader group with a negative target opinion in $[-1,0]$. A leader group agent has a constant degree to its target and mixes it with the average opinion of its group neighbors at each update. A follower has a constant degree to the average opinion of the opinion neighbors of each leader group and mixes it with the average opinion of its group neighbors at each update. In this paper, we consider a variant of the LF model, namely the mixed model, in which the degrees can vary over time, the opinions can be high dimensional, and the number of leader groups can be more than two. We investigate circumstances under which all agents achieve a consensus. In particular, a few leaders can dominate the whole population.
\end{abstract}

\begin{IEEEkeywords}
Hegselmann-Krause dynamics, Leader-Follower dynamics, consensus, e-commerce, dominance, target.
\end{IEEEkeywords}

%
\IEEEpeerreviewmaketitle

\section{Introduction}
%
%
%
%
\IEEEPARstart{T}{he} Hegselmann-Krause (HK) model and the Deffuant model are two of the most popular mathematical models in opinion dynamics. The similarities between the two are that both consist of a finite number of agents and set a constant confidence threshold, therefore belonging to the bounded confidence model. \cite{proskurnikov2017tutorial, hegselmann2002opinion, deffuant2000mixing, bhattacharyya2013convergence, lorenz2005stabilization, lorenz2007continuous, vasca2021practical, bernardo2021heterogeneous} and \cite{chen2020convergence} are some papers relating to the bounded confidence model. In both models, all agents have their opinion in $\mathbf{R}^d$ and two agents are opinion neighbors if their opinion distance does not exceed the confidence threshold. The original opinion space of both models is the closed interval between $0$ and $1$, namely $[0,1]$. The difference between the two is their interaction mechanisms. The former belongs to group interaction, whereas the latter belongs to pair interaction. 

The HK model is averaging dynamics, divided into two types: the synchronous HK model and the asynchronous HK model. The updated opinion of an agent is the average opinion of its opinion neighbors. All agents update their opinion simultaneously for the synchronous HK model and only one agent uniformly selected at random updates its opinion for the asynchronous model. \cite{etesami2015game} is a paper about the synchronous HK model and the asynchronous HK model. Some variants of the HK model were proposed, such as \cite{fu2015opinion, parasnis2018hegselmann} and \cite{fortunato2005consensus}, which belong to either the model with limited interactions or that with self-belief. The authors in \cite{lanchier2022consensus} raised a nontrivial lower bound for the probability of consensus of the HK model with continuous time. The author in \cite{mHK} proposed a variant of the HK model in which all agents can decide their degree of stubbornness and mix their opinion with the average opinion of their opinion neighbors, called the mixed HK model. The degree of stubbornness is a number between $0$ and $1$. The more stubborn an agent, the closer to $1$ its degree of stubbornness, and vice versa. The mixed HK model is the synchronous HK model when all agents are absolutely open-minded, and the asynchronous HK model, when only one agent uniformly selected at random is absolutely open-minded and the rest are absolutely stubborn. The author showed circumstances under which a consensus can be achieved or asymptotic stability holds. The techniques include Perron-Frobenius for Laplacians in \cite{biyikoglu2007laplacian}, Courant-Fischer Formula in \cite{horn2012matrix} and Cheeger's Inequality in \cite{beineke2004topics}. \cite{mHK2} is the sequel to \cite{mHK}. The mixed HK model is studied deterministically in \cite{mHK} and nondeterministically in \cite{mHK2}. 

Apart from the HK model, all agents in the Deffuant model interact in pairs and there is a social relationship, such as a friendship, among some agents. Two agents are social neighbors if they have a social relationship. A pair of social neighbors are selected at each time step and interact if and only if their opinion distance does not exceed the confidence threshold. The author in \cite{MR3069370} proposed the first proof of the main conjecture about the Deffuant model. An alternative proof to that conjecture is in \cite{MR2915577}. \cite{MR3164772}, \cite{MR3265084}, \cite{MR3694315} and \cite{MR4058367} are works related to the Deffuant model. The authors in \cite{lanchier2020probability} raised a nontrivial lower bound, which does not depend on the size of all agents, for the probability of consensus of the Deffuant model. Some properties of the HK model and the Deffuant model are in common. For instance, all agents are opinion-connected to each other thereafter if they are opinion-connected to each other at some time step. The author in \cite{mHK2} blended a social relationship in the mixed HK model and argued that the mixed HK model includes group interaction and pair interaction, therefore containing the HK model and the Deffuant model.

The  Leader-Follower (LF) dynamics originated from the Hegselmann-Krause (HK) dynamics. The authors in \cite{zhao2018understanding} proposed the LF model that partitions agents whose opinion is in $[-1,1]$ into a follower group, a leader group with a positive target opinion in $[0,1]$ and a leader group with a negative target opinion in $[-1,0]$. Namely, all agents consist of three types of agents: followers, positive target leaders and negative target leaders. In the original HK model, all agents update their opinion in $[0,1]$ by taking the average opinion of their opinion neighbors. In the LF model, all positive target leaders have a constant degree toward their positive target and mix the average opinion of their positive target neighbors with their positive target, all negative target leaders have a constant degree toward their negative target and mix the average opinion of their negative target neighbors with their negative target, and all followers have their constant degree toward the average opinion of their positive target neighbors and that toward the average opinion of their negative target neighbors and mix the average opinion of their follower neighbors with the average opinion of their positive target neighbors and the average opinion of their negative target neighbors. In a nutshell, the leader groups consider their group neighbors and their target, whereas the follower group follows others, therefore considering follower group neighbors, positive target group neighbors and negative target group neighbors. Interpreting mathematically, define $[n]=\{1,2,\ldots,n\}$. Say $N$ agents including $N_1$ followers, $N_2$ positive target agents and $N_3$ negative target agents, set as $[N_1]$, $[N_1+N_2]-[N_1]$ and $[N]-[N_1+N_2]$.
The mechanism is as follows:
$$\begin{array}{l}
\displaystyle x_i(t+1)=\frac{1-\alpha_i-\beta_i}{|N_i^F(t)|}\sum_{j\in N_i^F(t)}x_j(t)\vspace{2pt}\\
\displaystyle \hspace{55pt}+ \frac{\alpha_i}{|N_i^P(t)|}\sum_{j\in N_i^P(t)}x_j(t) \vspace{2pt}\\
\displaystyle \hspace{55pt}+ \frac{\beta_i}{|N_i^N(t)|}\sum_{j\in N_i^N(t)}x_j(t),\quad i=1,\ldots,N_1,\vspace{2pt}\\
 \displaystyle   x_i(t+1)=\frac{(1-w_i)}{|N_i^P(t)|}\sum_{j\in N_i^P(t)}x_j(t)+ w_i d,\\ \vspace{2pt} \hspace{2cm} i=N_1+1,\ldots,N_1+N_2, \vspace{2pt}\\
 \displaystyle x_i(t+1)=\frac{1-z_i}{|N_i^N(t)|}\sum_{j\in N_i^N(t)}(t)x_j(t)+ z_i g,\\ \vspace{2pt}\hspace{2cm} i=N_1+N_2+1,\ldots,N,

\end{array}$$
where 
$$\begin{array}{rcl}
\displaystyle x_i(t) &\n=\n& \hbox{opinion of agent $i$ at time $t$},\vspace{2pt}\\
  \displaystyle   d &\n\in\n& [0,1]\ \hbox{is the positive target opinion},\vspace{2pt} \\
 \displaystyle    g &\in& [-1,0]\ \hbox{is the negative target opinion}, \vspace{2pt}\\
\displaystyle \epsilon_i &\n=\n& \hbox{confidence threshold of agent }i,\vspace{2pt}\\
\displaystyle N_i^F(t)&\n=\n& \{j\in [N_1]:\norm{x_i(t)-x_j(t)}\leq \epsilon_i\}, \vspace{2pt}\\
  N_i^P(t)&\n=\n& \{j\in [N_1+N_2]-[N_1]:\vspace{2pt}\\
  &&\displaystyle\hspace{2cm}\norm{x_i(t)-x_j(t)}\leq \epsilon_i\}, \vspace{2pt}\\
\displaystyle   N_i^N(t)&\n=\n& \{j\in [N]-[N_1+N_2]:\vspace{2pt}\\
&&\displaystyle\hspace{2cm}\norm{x_i(t)-x_j(t)}\leq \epsilon_i\}, \vspace{2pt}\\
\displaystyle   \alpha_i &\n = \n& \hbox{degree to the average opinion of agent $i$'s} \\
&&\hbox{positive target neighbors}, \vspace{2pt}\\
\displaystyle   \beta_i &\n = \n& \hbox{degree to the average opinion of agent $i$'s}\\
&&\hbox{negative target neighbors}, \vspace{2pt}\\
   w_i &\n = \n& \hbox{degree to the positive target of agent }i, \vspace{2pt}\\
\displaystyle   z_i &\n = \n& \hbox{degree to the negative target of agent }i, \vspace{2pt}\\
   \alpha_i,\ &\beta_i,&\ w_i,\quad z_i \in [0,1].
\end{array}$$ 
The authors in \cite{zha2020opinion} pointed out that it can be an application in e-commerce. In this paper, we consider a variant of the LF model, namely the mixed LF model. The differences are:

\begin{itemize}
    \item opinions can be high dimensional,
    \item the number of leader groups can be more than two,
    \item the degree of a leader group agent to its group target and the degree of a follower group agent to the average opinion of each of its leader group neighbors can vary over time,
    \item only one confidence threshold, $\epsilon$.
\end{itemize}
 In the leader group, each agent can choose its degree to the target opinion and mix its opinion with the average opinion of its group neighbors at each update. In the follower group, each agent can choose its degree to the average opinion of its neighbors in each leader group and mix its opinion with the average opinion of its group neighbors at each update. In a nutshell, leader group members only can interact in its group, whereas follower group members can interact out of its group. Considering a finite set of agents partitioned to a follower group, $F$, and $m$ leader groups, $L_1, \ldots, L_m$, the mixed model is as follows:
 $$\begin{array}{l}
  \displaystyle    x_i(t+1)=\frac{\alpha_i^k(t)}{|N_i^{L_k}(t)|}\sum_{j\in  N_i^{L_k}(t)}x_j(t)+(1-\alpha_i^k(t))g_k,\\ \vspace{2pt}\hspace{2cm} i\in L_k, \vspace{2pt}  \\
   \displaystyle   x_i(t+1)=\frac{\big(1-\sum_{k=1}^m\beta_i^k(t)\big)}{|N_i^F(t)|}\sum_{k\in N_i^F(t)}x_k\\ \vspace{2pt}\hspace{2cm}\displaystyle+\sum_{k=1}^m\frac{\beta_i^k(t)}{|N_i^{L_k}(t)|}\sum_{k\in N_i^{L_k}(t)}x_k, \quad  i\in F, 
 \end{array}$$
where
$$\begin{array}{rcl}
   \displaystyle  \alpha_i^k(t)&\n\in\n&[0,1]\hbox{ is the degree to the average opinion of} \\
   &&\hbox{agent $i$'s group neighbors}, \vspace{2pt}\\
   \displaystyle g_k&\n \in\n &\mathbf{R^d}\hbox{ is the target opinion of leader group }k, \vspace{2pt}\\
N_i^F(t) &\n=\n& \{j\in F:\norm{x_j(t)-x_i(t)}\leq\epsilon\}, \vspace{2pt}  \\
     N_i^{L_k}(t) &\n=\n& \{j\in L_k:\norm{x_j(t)-x_i(t)}\leq\epsilon\}, \vspace{2pt}\\
     \beta_i^k(t) &\n=\n& \hbox{degree to the average opinion of agent $i$'s}\\
     &&\hbox{neighbors in leader group $k$,}\vspace{2pt}\\
     \beta_i^k(t) &\n\in\n& [0,1]\ \hbox{and equals}\ 0\ \hbox{if}\ N_i^{L_k}(t)=\emptyset .
\end{array}$$ 
Observe that the system of leader group $k$ becomes a synchronous HK model if $\alpha_i^k(t)=1$ for all $i\in L_k$, and the follower group system becomes a synchronous HK model if $\beta_i^k(t)=0$ for all $i\in F$ and $k\in [m]$.
The differences between HK dynamics and LF dynamics are:
\begin{itemize}
    \item HK dynamics have only one group and one interaction rule, whereas LF dynamics has two types of groups, the leader group and the follower group, in which each has its interaction rule, 
    \item the update of an agent in HK dynamics only depends on its neighbors, whereas the update of an agent in LF dynamics may also depend on a target.
\end{itemize}

 
\section{Main results}
The next theorem shows circumstances under which all agents in a leader group achieve their target. It turns out that the degree to their target, $1-\alpha_i(t)$, can be very small to achieve their target for all agents in $L$.

\begin{theorem}\label{tar}
Assume that $\limsup_{t\to\infty}\max_{i\in L}\alpha_i(t)<1$. Then, $$\lim_{t\to\infty}\max_{i\in L}\norm{x_i(t)-g}=0.$$
In particular, $\lim_{t\to\infty}\norm{x_i(t)-g}=0$ if $\lim_{t\to\infty}\alpha_i(t)=0$ for some $i\in L$.
\end{theorem}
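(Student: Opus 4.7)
The plan is to fix the leader group $L=L_k$ (dropping the $k$ superscripts), let $g=g_k$ denote its target, and track the single scalar sequence $M(t):=\max_{i\in L}\norm{x_i(t)-g}$. First I would rewrite the leader update in centered form: writing $g=\frac{1}{|N_i^{L}(t)|}\sum_{j\in N_i^{L}(t)}g$ and subtracting $g$ from both sides of the defining equation yields the key identity
$$x_i(t+1)-g=\frac{\alpha_i(t)}{|N_i^{L}(t)|}\sum_{j\in N_i^{L}(t)}(x_j(t)-g),$$
which is well-defined because $i\in N_i^{L}(t)$ (since $\norm{x_i(t)-x_i(t)}=0\leq\epsilon$). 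The triangle inequality together with $\norm{x_j(t)-g}\leq M(t)$ then gives $\norm{x_i(t+1)-g}\leq\alpha_i(t)M(t)$, hence the one-line recursion
$$M(t+1)\leq\Bigl(\max_{i\in L}\alpha_i(t)\Bigr)M(t)\leq M(t).$$
In particular $M$ is non-increasing regardless of any hypothesis, because every $\alpha_i(t)\in[0,1]$.

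Under the assumption $\limsup_{t\to\infty}\max_{i\in L}\alpha_i(t)<1$, I would choose $\rho<1$ and $T$ with $\max_{i\in L}\alpha_i(t)\leq\rho$ for all $t\geq T$, and iterate the recursion to obtain $M(t)\leq\rho^{\,t-T}M(T)$ for $t\geq T$. This yields the geometric decay $M(t)\to 0$ claimed in the first conclusion. For the ``in particular'' clause the $\limsup$ assumption is not required at all: the unconditional monotonicity $M(t)\leq M(0)$ combined with the pointwise bound $\norm{x_i(t+1)-g}\leq\alpha_i(t)M(0)$ forces $x_i(t)\to g$ for any single agent whose own $\alpha_i(t)$ tends to zero.

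The whole argument collapses to iterating a one-dimensional linear contraction, so there is no substantial obstacle; the only bookkeeping points are the algebraic re-centering above and the observation that $|N_i^{L}(t)|\geq 1$. This simplicity is consistent with the theorem's remark that an arbitrarily small pull toward the target---so long as its strength is bounded away from zero in the $\limsup$ sense---is enough to drag the entire leader group to $g$.
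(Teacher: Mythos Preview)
Your proof is correct and follows the same route as the paper: derive the contractive recursion $M(t+1)\leq\bigl(\max_{i\in L}\alpha_i(t)\bigr)M(t)$ (this is the paper's Lemma~\ref{rec}) and iterate it, then handle the ``in particular'' clause via the unconditional bound $\norm{x_i(t+1)-g}\leq\alpha_i(t)M(0)$. The only cosmetic difference is that the paper bounds the product of $\alpha$-maxima by extracting a subsequence along which the maximum is $\leq\delta<1$ (the remaining factors being $\leq 1$), whereas you use the cleaner observation that $\limsup<1$ forces \emph{all} large-$t$ factors below some $\rho<1$.
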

Next, we investigate circumstances under which all agents achieve a consensus. It turns out that a consensus can be achieved as long as their opinion lies in $B(g,\delta)$ for some $\delta<\epsilon$ at some time step and the minimum degree of all leader group agents to their goal and the minimum degree of all followers to the average opinion of their leader neighbors have a lower bound after some time step.
\begin{theorem}\label{cs}
Assume that $\big\{x_i(t)\big\}_{i\in L\cup  F}\subset B(g,\epsilon) $ for some $t\geq 0$ and that $$\sup_{s\geq t}\big\{\max_{i\in F}(1-\beta_i(s)),\ \max_{i\in L}\alpha_i(s)\big\}<1.$$ Then, $$\lim_{t\to \infty}\max_{i\in  L\cup F} \norm{x_i(t)-g}=0.$$
\end{theorem}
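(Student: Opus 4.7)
The plan is to piggyback on Theorem~\ref{tar} for the leaders and then propagate that convergence to the followers through a linear contraction.

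Set $c := \sup_{s \geq t}\max_{i \in F}(1 - \beta_i(s)) < 1$, $D_L(s) := \max_{i \in L}\|x_i(s) - g\|$ and $D_F(s) := \max_{i \in F}\|x_i(s) - g\|$. The hypothesis implies $\limsup_{s \to \infty}\max_{i \in L}\alpha_i(s) < 1$, so Theorem~\ref{tar} immediately yields $D_L(s) \to 0$. Before exploiting this, I would record forward-invariance of $B(g,\epsilon)$: each leader update equals $g$ plus $\alpha_i \leq 1$ times a convex average of $\{x_j - g\}$, and each follower update is a convex combination of agent opinions, so by induction every opinion remains in $B(g,\epsilon)$ and in particular $D_L(s), D_F(s) \leq \epsilon$ for every $s$ past the initial time in the hypothesis.

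The core step is a linear recursion for $D_F$. The inequality $\beta_i(s) \geq 1-c > 0$ plays a double role: since by definition $\beta_i^k(s) = 0$ whenever $N_i^{L_k}(s) = \emptyset$, positivity forces every follower to have at least one leader neighbor at every such $s$, so the follower update is well-defined and genuinely pulled toward the leaders; and it supplies a uniform contraction constant. Decomposing $x_i(s+1) - g$ for $i \in F$ into its follower-average and leader-average parts and applying the triangle inequality,
\[
\|x_i(s+1) - g\| \leq (1 - \beta_i(s))\,D_F(s) + \beta_i(s)\,D_L(s) \leq c\, D_F(s) + D_L(s),
\]
hence $D_F(s+1) \leq c\, D_F(s) + D_L(s)$.

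The conclusion follows by the standard perturbation argument for first-order linear recursions: given $\eta > 0$, pick $T$ so large that $D_L(s) \leq (1-c)\eta$ for all $s \geq T$; iterating the recursion yields $D_F(T+k) \leq c^k D_F(T) + \eta$, so $\limsup_{s \to \infty} D_F(s) \leq \eta$, and letting $\eta \downarrow 0$ gives $D_F(s) \to 0$. Combined with $D_L(s) \to 0$ this gives the theorem. The only real subtlety is the double role of the $\beta$-hypothesis highlighted above; once that is recognized, the remainder is routine bookkeeping.
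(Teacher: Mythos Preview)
Your argument is correct and mirrors the paper's: Theorem~\ref{tar} handles the leaders, convexity (the paper's Lemma~\ref{cir}) gives forward invariance of $B(g,\epsilon)$, and the same linear recursion $D_F(s+1)\le c\,D_F(s)+D_L(s)$ is then unwound to finish. The only substantive difference is in justifying that followers actually have leader neighbors so that the recursion is meaningful: the paper argues geometrically that eventually $N_i^L(s)=L$ (leaders cluster near $g$ while followers remain in $B(g,\epsilon)$), whereas you read $N_i^L(s)\neq\emptyset$ straight off the model convention ``$\beta_i(s)=0$ if $N_i^{L}(s)=\emptyset$'' together with the hypothesis $\beta_i(s)\ge 1-c>0$---a tidy shortcut, but the overall strategy is the same.
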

Considering an LF system consisting of a follower group and $m$ leader group, we have the following corollaries. Given that the opinions of all agents and all targets fall in some open ball centered at some target of radius $\epsilon$ and that the maximum tendency of all leaders toward their leader group opinion neighbors and all followers toward their follower group opinion neighbors is less than some constant less than 1 after some time step. Then, the distance between each agent's opinion and its convex combination of all targets approaches 0 over time.

\begin{corollary}
Assume that $$ \big\{x_i(t),\ g_k\big\}_{i\in (\bigcup_{k=1}^m L_k)\cup F,\ k\in [m]}\subset B(g_j,\epsilon)$$ for some $j\in[m]$ and $t\geq 0$ and that $$\sup_{s\geq t}\big\{\max_{i\in F}(1-\sum_{k=1}^m\beta_i^k(s)),\ \max_{k\in [m]} \max_{i\in L_k}\alpha_i^k(s)\big\}<1.$$ Then, $$\lim_{t\to \infty}\max_{i\in  (\bigcup_{k=1}^m L_k)\cup F} \norm{x_i(t)- \frac{\sum_{k=1}^m\beta_i^k(t)  g_k}{\sum_{j=1}^m\beta_i^j(t)} }=0.$$
In particular for all $ i\in  (\bigcup_{k=1}^m L_k)\cup F$, $$ \lim_{t\to \infty}x_i(t)=  \frac{\sum_{k=1}^m\beta_i^k g_k}{\sum_{j=1}^m\beta_i^j} $$ if $\beta_i^k(t)\to \beta_i^k\hbox{ as }t\to \infty\hbox{ for all }k\in[m].$
\end{corollary}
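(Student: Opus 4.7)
My plan is to mirror the proof of Theorem \ref{cs}, handling each leader group separately via Theorem \ref{tar} and then tracking the residuals $x_i(s) - c_i(s)$ of the followers against their moving target combinations $c_i(s) := \sum_{k=1}^m \beta_i^k(s) g_k / \sum_{j=1}^m \beta_i^j(s)$.

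First I would verify that $B(g_j,\epsilon)$ is forward invariant: every leader update $x_i(s+1)$ for $i \in L_k$ is a convex combination of opinions $x_r(s)$ with $r \in N_i^{L_k}(s) \subset L_k$ and of the target $g_k$, all of which lie in $B(g_j,\epsilon)$, and the open ball is convex; the follower update is likewise a convex combination of points in the ball, so the inclusion propagates to all $s \geq t$. Next, the hypothesis forces $\limsup_{s \to \infty} \max_{i \in L_k} \alpha_i^k(s) < 1$ for every $k$, so Theorem \ref{tar} applied groupwise yields $\max_{i \in L_k}\|x_i(s) - g_k\| \to 0$; this settles the claim for leaders under the natural convention that a leader in $L_k$ has target combination $g_k$.

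For the followers, I would decompose the update as
\[
x_i(s+1) = \gamma_i(s)\,\bar x_i^F(s) + (1-\gamma_i(s))\,c_i(s) + e_i(s),
\]
where $\gamma_i(s) = 1 - \sum_k \beta_i^k(s)$, $\bar x_i^F(s)$ is the follower-neighbor average, and $e_i(s) = \sum_k \beta_i^k(s)\bigl(\bar x_i^{L_k}(s) - g_k\bigr)$. The leader step above forces $e_i(s) \to 0$: indices with empty leader neighborhood vanish because $\beta_i^k(s) = 0$ there, while the remaining neighborhood averages approach $g_k$. The hypothesis supplies a uniform $\eta > 0$ with $\gamma_i(s) \leq 1-\eta$ for every follower $i$ and every $s \geq t$, so with $D(s) := \max_{i \in F}\|x_i(s) - c_i(s)\|$ one aims at an inequality of the form $D(s+1) \leq (1-\eta)D(s) + o(1)$, forcing $D(s) \to 0$. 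The ``in particular'' clause then follows from $\beta_i^k(s) \to \beta_i^k$, which gives $c_i(s) \to \sum_k \beta_i^k g_k / \sum_j \beta_i^j$, combined with $\|x_i(s) - c_i(s)\| \to 0$.

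The main obstacle is closing that residual inequality, because $\bar x_i^F(s)$ mixes opinions of other followers whose target combinations $c_j(s)$ need not equal $c_i(s)$. Bounding $\|\bar x_i^F(s) - c_i(s)\|$ naively yields the cross-term $\max_{j \in N_i^F(s)}\|c_j(s) - c_i(s)\|$, which need not vanish. I would absorb these cross-terms by exploiting that every $c_i(s)$ lies in $\mathrm{conv}\{g_1, \ldots, g_m\} \subset B(g_j,\epsilon)$, so they are bounded a priori, and then combine the strict contraction factor $1-\eta$ from the hypothesis with the vanishing leader error $e_i(s)$ to drive $D(s)$ to zero, invoking, for the ``in particular'' regime, the additional stabilization $\beta_i^k(s) \to \beta_i^k$ that eliminates the lingering drift of the targets $c_i(s)$.
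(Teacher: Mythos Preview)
The paper supplies no proof of this corollary, so there is nothing to compare your argument against directly. Your forward-invariance step and the groupwise application of Theorem~\ref{tar} are correct; the problem is exactly the obstacle you isolate and then wave past. Boundedness of the $c_i(s)$ inside $\mathrm{conv}\{g_1,\dots,g_m\}$ buys you nothing: from your own decomposition one only obtains
\[
D(s+1)\;\le\;(1-\eta)\Bigl(D(s)+\max_{i,j\in F}\|c_i(s)-c_j(s)\|\Bigr)+\max_{i\in F}\|e_i(s)\|,
\]
and iterating gives at best $\limsup_{s}D(s)\le\frac{1-\eta}{\eta}\max_{i,j\in F}\|c_i-c_j\|$, not zero. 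The strict contraction factor and the vanishing leader error cannot ``absorb'' a nonvanishing cross-term.

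This is not a repairable gap in your route; the stated conclusion already fails in a two-follower example. Take $m=2$, $\epsilon=1$, $g_1=-g_2=0.1$, one leader per group sitting at its target with $\alpha\equiv\tfrac12$, and two followers $i,j$ with constant weights $\beta_i^1=\beta_j^2=\tfrac12$, $\beta_i^2=\beta_j^1=0$, started at the origin. All hypotheses hold (the relevant supremum equals $\tfrac12$), $c_i\equiv g_1$, $c_j\equiv g_2$, yet after one step the followers freeze at $x_i=0.05$, $x_j=-0.05$, so $\|x_i-c_i\|=0.05$ forever. Hence the inequality $D(s+1)\le(1-\eta)D(s)+o(1)$ you are aiming for is simply false without an extra hypothesis such as all followers sharing the same weight vector, or $1-\sum_k\beta_i^k(s)\to 0$ uniformly in $i\in F$.
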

Given that all leader groups are independent systems in which all followers belong to one of them and that the maximum tendency of all leaders toward their leader group opinion neighbors and all followers toward their follower group opinion neighbors is less than some constant less than 1 after some time step, then all systems achieve their target over time.

\begin{corollary}
Assume that $\min_{i,j\in [m]}\norm{g_i-g_j}>3\epsilon$, $(x_i(t))_{i\in L_k}\subset B(g_k,\epsilon)$\ \hbox{for some}\ $t\in \mathbf{N}$\ \hbox{and for all}\ $k\in [m]$ and $x_j(t)\in B(g_k,\epsilon)$ for all $j\in F$ and for some $k\in [m]$, and that $$\sup_{s\geq t}\big\{\max_{i\in F}(1-\sum_{k=1}^m\beta_i^k(s)),\ \max_{k\in [m]} \max_{i\in L_k}\alpha_i^k(s)\big\}<1.$$ Then, all systems eventually achieve their target.
\end{corollary}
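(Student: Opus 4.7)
The plan is to use the separation hypothesis $\min_{i,j\in[m]}\|g_i-g_j\|>3\epsilon$ to decouple the combined dynamics into $m$ independent instances of the single-leader-group model, each of which falls under Theorem~\ref{cs}. For each follower $j\in F$, let $k(j)\in[m]$ denote the index with $x_j(t)\in B(g_{k(j)},\epsilon)$; this index is unique because any two balls $B(g_a,\epsilon)$ and $B(g_b,\epsilon)$ with $a\ne b$ are disjoint under the separation assumption. Write $F_k=\{j\in F: k(j)=k\}$, so $F=\bigsqcup_k F_k$.

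First I would record the geometric separation: if $p\in B(g_a,\epsilon)$ and $q\in B(g_b,\epsilon)$ with $a\neq b$, then
\[
\|p-q\|\geq \|g_a-g_b\|-\|g_a-p\|-\|g_b-q\|>3\epsilon-2\epsilon=\epsilon,
\]
so $p$ and $q$ are not opinion neighbors. Hence at time $t$, an agent of $L_k\cup F_k$ has no opinion neighbors in any $L_j$ or $F_j$ with $j\neq k$. Using the model's convention $\beta_i^j(t)=0$ whenever $N_i^{L_j}(t)=\emptyset$, the follower update for $i\in F_k$ only feels the leader group $L_k$, so it becomes a convex combination (coefficients summing to $1$) of averages drawn from $L_k$ and $F_k$ alone.

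Next I would prove by induction on $s\geq t$ that $\{x_i(s)\}_{i\in L_k\cup F_k}\subset B(g_k,\epsilon)$ for every $k$. The base case is the hypothesis of the corollary. For the inductive step, an agent in $L_k$ updates to a convex combination of $g_k$ with the average of its $L_k$-neighbors, all of which lie in the convex set $B(g_k,\epsilon)$ by the inductive hypothesis; an agent in $F_k$ updates to a convex combination of two averages drawn, respectively, from $F_k$ and $L_k$, which again lie in $B(g_k,\epsilon)$. The separation argument then persists at time $s+1$, so the $m$ subsystems remain decoupled forever.

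Finally, each restricted dynamics on $L_k\cup F_k$ is a single-leader-group mixed LF system whose opinions lie in $B(g_k,\epsilon)$ from time $t$ onward. For $i\in F_k$ one has $1-\sum_{j}\beta_i^j(s)=1-\beta_i^k(s)$ since the other $\beta_i^j(s)$ vanish, so the uniform bound $\sup_{s\geq t}\{\max_{i\in F}(1-\sum_{j}\beta_i^j(s)),\ \max_{k}\max_{i\in L_k}\alpha_i^k(s)\}<1$ descends to exactly the hypothesis of Theorem~\ref{cs} on each subsystem. Applying Theorem~\ref{cs} to subsystem $k$ yields $\|x_i(s)-g_k\|\to 0$ for every $i\in L_k\cup F_k$, which is the desired conclusion. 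The main obstacle I anticipate is the bookkeeping in the invariance step: one must check that once the zero $\beta_i^j$'s are dropped the remaining weights still add to $1$ and each average is genuinely a convex combination of points in $B(g_k,\epsilon)$, but this is a direct calculation once the neighbor sets are correctly identified via the $3\epsilon$-separation.
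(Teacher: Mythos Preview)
Your argument is correct and is precisely the route the paper intends: the paper states this result as a corollary of Theorem~\ref{cs} without supplying a separate proof, and the natural derivation is exactly the decoupling via the $3\epsilon$-separation that you carry out, followed by an application of Theorem~\ref{cs} to each of the $m$ resulting single-leader subsystems. The two points needing care---the inductive invariance $\{x_i(s)\}_{i\in L_k\cup F_k}\subset B(g_k,\epsilon)$ and the identification $1-\sum_j\beta_i^j(s)=1-\beta_i^k(s)$ for $i\in F_k$---are both handled correctly.
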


\section{The model}
We first investigate the behavior of a leader group. Let $L$ be a leader group and $g$ be its target. It turns out that the maximum distance between its opinions and target, $\max_{i\in L}\norm{x_i(t)-g}$, is nonincreasing over time. 
\begin{lemma}\label{rec}
We have $$\norm{x_i(t+1)-g}\leq \alpha_i(t)\max_{j\in N_i(t)}\norm{x_j(t)-g}\ \hbox{for all }i\in L.$$ 
In particular,
$$\max_{i\in L}\norm{x_i(t+1)-g}\leq\max_{i\in L}\alpha_i(t)\max_{i\in L}\norm{x_i(t)-g}.$$
\end{lemma}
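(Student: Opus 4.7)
The plan is to derive both inequalities by a direct computation that rewrites the leader update as an affine combination centered at the target $g$. The key observation is that, since the coefficients $\alpha_i(t)/|N_i(t)|$ summed over $j \in N_i(t)$ give exactly $\alpha_i(t)$, we can absorb the $(1-\alpha_i(t))g$ term and write $x_i(t+1) - g$ as a scaled average of the displacements $x_j(t) - g$ with total weight $\alpha_i(t)$.

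Concretely, I would first substitute the leader update rule into $x_i(t+1) - g$ and rearrange to obtain
$$x_i(t+1) - g \;=\; \frac{\alpha_i(t)}{|N_i(t)|} \sum_{j \in N_i(t)} \bigl(x_j(t) - g\bigr).$$
Then the triangle inequality gives
$$\norm{x_i(t+1) - g} \;\leq\; \frac{\alpha_i(t)}{|N_i(t)|} \sum_{j \in N_i(t)} \norm{x_j(t) - g} \;\leq\; \alpha_i(t) \max_{j \in N_i(t)} \norm{x_j(t) - g},$$
which is the first inequality. Note this step implicitly assumes $N_i(t)$ is nonempty; this is fine because $i \in N_i(t)$ (each agent is within distance $0 \leq \epsilon$ of itself), so the average is well defined.

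For the second inequality, I would use that $N_i(t) \subseteq L$ by definition of the leader-group neighborhood, so
$$\max_{j \in N_i(t)} \norm{x_j(t) - g} \;\leq\; \max_{j \in L} \norm{x_j(t) - g},$$
and bounding $\alpha_i(t) \leq \max_{i \in L} \alpha_i(t)$ then taking the maximum over $i \in L$ on the left yields the monotonicity statement. There is no real obstacle here; this is a one-line triangle-inequality argument, and the only thing to be careful about is ensuring the rearrangement is valid (which it is, since the coefficients $\alpha_i(t)/|N_i(t)|$ repeated $|N_i(t)|$ times plus $1-\alpha_i(t)$ form a convex combination summing to $1$).
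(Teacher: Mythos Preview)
Your proposal is correct and follows essentially the same route as the paper: rewrite $x_i(t+1)-g$ as $\frac{\alpha_i(t)}{|N_i(t)|}\sum_{j\in N_i(t)}(x_j(t)-g)$, apply the triangle inequality to get the first bound, and then take maxima over $i\in L$ for the second. Your version is slightly more explicit (noting $i\in N_i(t)$ so the average is well defined, and spelling out $N_i(t)\subseteq L$), but the argument is the same.
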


\begin{proof}
Observe that 
\begin{align*}
    \norm{x_i(t+1)-g}&=\frac{\alpha_i(t)}{|N_i(t)|}\norm{\sum_{j\in N_i(t)}[x_j(t)-g]}\\
    &\leq\alpha_i(t)\max_{j\in N_i(t)}\norm{x_j(t)-g}.
\end{align*}
Therefore,
$$\max_{i\in L}\norm{x_i(t+1)-g}\leq\max_{i\in L}\alpha_i(t)\max_{i\in L}\norm{x_i(t)-g}.$$
\end{proof}

\begin{proof}[\bf Proof of Theorem \ref{tar} ]
Since $\limsup_{t\to\infty}\max_{i\in L}\alpha_i(t)<1$, there is $(t_k)_{k\geq0}\subset\mathbf{N}$ strictly increasing such that $\max_{i\in L}\alpha_i(t_k)\leq\delta<1$ for some $\delta$. For all $t\geq1$, $t_s< t\leq t_{s+1}$ for some $s\in\mathbf{N}$. Via Lemma \ref{rec},
\begin{align*}
   & \max_{i\in L}\norm{x_i(t)-g}\\
    &\hspace{6pt}\leq\max_{i\in L}\alpha_i(t-1)\ldots\max_{i\in L}\alpha_i(t_0)\max_{i\in L}\norm{x_i(t_0)-g}\\
    &\hspace{6pt}\leq \delta^{s+1}\max_{i\in L}\norm{x_i(t_0)-g}.
\end{align*}
As $t\to\infty$, $s\to\infty$. Therefore, $$\limsup_{t\to\infty}\max_{i\in L}\norm{x_i(t)-g}\leq 0.$$ Hence, $$\lim_{t\to\infty}\max_{i\in L}\norm{x_i(t)-g}=0.$$
Also, due to Lemma \ref{rec} and $\max_{j\in L}\norm{x_j(t)-g}$ bounded over time,
$$\limsup_{t\to\infty}\norm{x_i(t+1)-g}\leq 0\ \hbox{therefore }\lim_{t\to\infty}\norm{x_i(t)-g}=0.$$

\end{proof}
In other words, all leader group agents achieve their target as long as the minimum degree to their target has a lower bound larger than 0 infinitely many times. In particular, a leader group agent achieves the group target if its degree to the target approaches 1 over time. Now, considering an LF system consisting of a follower group F and a leader group L, let $B(c,r)$ be an open ball centered at $c$ of radius $r$. 
\begin{definition}
The convex hull generated by $v_1,\ldots,v_n$, $C(\{v_1,\ldots,v_n\})$, is the smallest convex set containing $v_1,\ldots,v_n$, i.e., $$C(\{v_1,\ldots,v_n\})=\{\sum_{i=1}^n a_i v_i:(a_i)_{i=1}^n\ \hbox{is stochastic}\}.$$

\end{definition}

Observe that $x_i(t+1)\in C(\{x_k(t)\}_{k\in L\cup F}\cup \{g\})$ for all $i\in L\cup F$; therefore we have the following lemma. 

\begin{lemma}\label{cir}
For all $\delta\geq 0$, if $$\big\{x_k(t)\big\}_{k\in L\cup F}\subset B(g,\delta),\ \hbox{then}\ \big\{x_k(t+1)\big\}_{k\in L\cup F}\subset B(g,\delta).$$
\end{lemma}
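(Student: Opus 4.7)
The plan is to combine the preceding observation that $x_i(t+1)\in C(\{x_k(t)\}_{k\in L\cup F}\cup\{g\})$ with the elementary fact that open balls in a normed space are convex. Once both ingredients are in hand, the lemma reduces to a one-line invariance statement: the convex hull of a set contained in a convex region stays in that region.

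First I would dispose of the degenerate case $\delta=0$, where $B(g,0)=\emptyset$ makes the hypothesis vacuous and the conclusion trivial. For $\delta>0$ I would check the three inputs needed for a clean convexity argument: (i) $g\in B(g,\delta)$ since $\norm{g-g}=0<\delta$; (ii) $\{x_k(t)\}_{k\in L\cup F}\subset B(g,\delta)$ by the hypothesis of the lemma; and (iii) $B(g,\delta)\subset\mathbf{R}^d$ is convex as an open ball of a norm. Combining (i)--(iii), the full generating set of $C(\{x_k(t)\}_{k\in L\cup F}\cup\{g\})$ sits inside the convex set $B(g,\delta)$, hence so does the entire convex hull.

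To close the argument I would then invoke the preceding observation for each $i\in L\cup F$: the update rule writes $x_i(t+1)$ as a nonnegative combination of elements of $\{x_j(t)\}_{j\in L\cup F}\cup\{g\}$ whose coefficients sum to one (with weights built from $\alpha_i(t),\beta_i(t)\in[0,1]$ and uniform averages over the neighbor sets). Therefore $x_i(t+1)\in C(\{x_k(t)\}_{k\in L\cup F}\cup\{g\})\subset B(g,\delta)$, which is exactly the desired conclusion. The only point requiring a sanity check is that the update formulas genuinely yield convex combinations of the stated generators, and this is built into the definitions of $\alpha_i(t)$ and $\beta_i(t)$; there is no real obstacle to the proof, as the lemma is essentially a bookkeeping statement of convex invariance under the dynamics.
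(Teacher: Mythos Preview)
Your proposal is correct and follows exactly the paper's own approach: the paper states the observation $x_i(t+1)\in C(\{x_k(t)\}_{k\in L\cup F}\cup\{g\})$ immediately before the lemma and treats the lemma as a direct consequence, which is precisely what you spell out via the convexity of $B(g,\delta)$.
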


\begin{proof}[\bf Proof of Theorem \ref{cs}]
Via Theorem \ref{tar},  $$\max_{k\in L}\norm{x_k(s)-g}<\delta\ \hbox{for all $\delta>0$,  for some $p\geq t$}$$  and for all $s\geq p.$
For all $s\geq p$, $\delta>0$, $i\in F$ and $j\in L$, via Lemma \ref{cir}, we have $$\norm{x_i(s)-x_j(s)}\leq \norm{x_i(s)-g}+\norm{g-x_j(s)}< \epsilon+\delta $$ 
therefore $\norm{x_i(s)-x_j(s)}\leq \epsilon$ and $N_i^L(s)=L$.
Let $\alpha_t=\max_{k\in L}\alpha_k(t)$, $\beta_t=\max_{k\in F}(1-\beta_k(t))$, $\gamma=\sup_{s\geq t}\{\beta_s,\ \alpha_s\}$,  $A_t=\max_{k\in F}\norm{x_k(t)-g}$ and $C_t=\max_{k\in L}\norm{x_k(t)-g}$.
By Lemma \ref{rec} and the triangle inequality, for all $i\in F$ and $t> p$,
\begin{align*}
  & A_{t+1}\leq\  \beta_t A_t+ C_t\\
  &\hspace{0.4cm} \leq\  \beta_t\beta_{t-1}\ldots\beta_p A_{p}+\beta_t\ldots\beta_{p+1}C_p+\ldots+\beta_t C_{t-1} + C_{t}\\
 &\hspace{0.4cm}  \leq\  \gamma^{t-p+1}A_p+(t-p+1)\gamma^{t-p}C_p
\end{align*}
therefore
$$\limsup_{t\to\infty}A_{t+1}\leq 0.$$
Via Theorem \ref{tar}, we are done.
\end{proof}

\section{Conclusion}
All agents of a leader group achieve their consensus as long as there are infinitely many times that the most stubborn are not too stubborn. For a leader group and a follower group, all agents eventually achieve the leader group's target if all agents' opinion is within $\epsilon$ distance from the leader group's target and all agents are not too stubborn. In particular, a few leaders can dominate the whole population.   

\section{Simulations}
For simulations of Theorem~\ref{tar}, considering a leader group of size 100, all opinions uniform on $[-1,0]$, confidence threshold equal to 0.05, target equal to 1 and all agents' tendencies to their leader group opinion neighbors having 1/3 of chance being 0.99 and 1 otherwise. Then, all leader group agents achieve their target eventually as Figure~\ref{fig:leader_group} shows. For simulations of Theorem~\ref{cs}, considering a leader group of size 2 with opinions 0.99 and -0.99, a follower group of size 100 with opinions uniform on $[0.5, 1]$, confidence threshold equal to 1, the target of the leader group equal to 0 and the tendencies of all leader group agents toward their leader group opinion neighbors and that of all followers toward their follower group opinion neighbors are 0.99 at all times. Then, all agents achieve the target eventually as Figure~\ref{fig:leader_follower_group} shows.

\begin{figure}[H]
\centering

\includegraphics[width=2.5in]{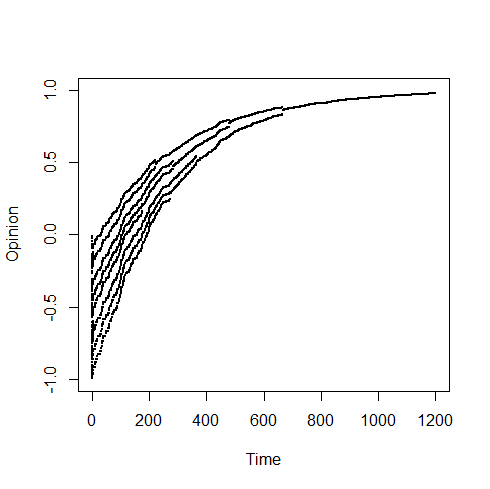}
\caption{Interaction of a leader group}
\label{fig:leader_group}
\end{figure}

\begin{figure}[H]
\centering

\includegraphics[width=2.5in]{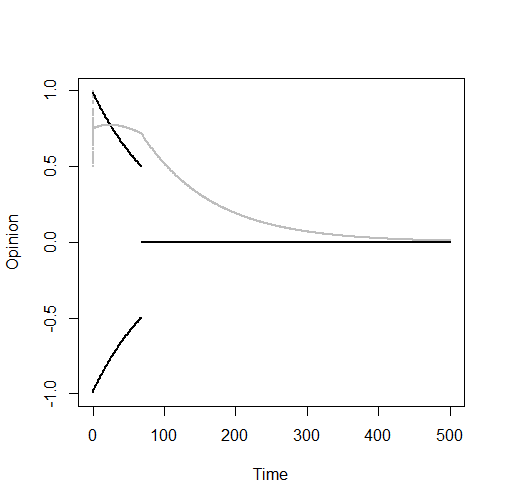}
\caption{Interaction of a leader group and a follower group}
\label{fig:leader_follower_group}
\end{figure}


%



\section*{Acknowledgment}
The author is funded by the National Science and Technology Council.


\ifCLASSOPTIONcaptionsoff
  \newpage
\fi

\end{document}